\definecolor{codegray}{rgb}{0.95,0.95,0.95}
\definecolor{pykeyword}{rgb}{0.13,0.13,1}
\definecolor{pystring}{rgb}{0.58,0,0.82}
\lstdefinestyle{pythonstyle}{
    backgroundcolor=\color{codegray},
    language=Python,
    basicstyle=\ttfamily\small,
    keywordstyle=\color{pykeyword}\bfseries,
    stringstyle=\color{pystring},
    commentstyle=\color{gray},
    showstringspaces=false,
    numbers=left,
    numberstyle=\tiny,
    frame=single,
    breaklines=true,
    tabsize=4,
}
\numberwithin{equation}{section}
\theoremstyle{plain}
\newtheorem{theorem}{Theorem}
\newtheorem{defn}[theorem]{Definition}
\newtheorem{coro}[theorem]{Corollary}
\newtheorem{lemma}[theorem]{Lemma}
\newtheorem{prop}[theorem]{Proposition}
\newtheorem{obse}[theorem]{Observation}
\newtheorem{remark}[theorem]{Remark}
\begin{document}
\title{Mutual-Visibility of Tree and Its Line Graphs} 
\author{Tonny K B}
\address{Tonny K B, Department of Mathematics, College of Engineering Trivandrum, Thiruvananthapuram, Kerala, India, 695016.}
\email{tonnykbd@cet.ac.in}
\author{Shikhi M}
\address{Shikhi M, Department of Mathematics, College of Engineering Trivandrum, Thiruvananthapuram, Kerala, India, 695016.}
\email{shikhim@cet.ac.in}

\begin{abstract}
In this paper, we present a complete characterization of mutual-visibility sets in trees. It is shown that a subset $S$ is a mutual-visibility set of a tree $T$ if and only if it coincides with the set of leaves of the Steiner subtree $T\langle S\rangle$. For trees containing branch vertices, the notion of legs is introduced, and an explicit formula for the number of maximal mutual-visibility sets is derived in terms of the corresponding leg lengths. We prove that every tree is absolute-clear. It is further shown that, for every tree $T$ with at least two edges, the mutual-visibility number is preserved under the line graph operation, that is, $\mu(L(T))=\mu(T)$. Examples of unicyclic and block graphs for which this equality fails are also presented. Finally, a tight lower bound for the mutual-visibility number of the iterated line graph is established; namely, $\mu\bigl(L(L(T))\bigr)\ge \left\lfloor \frac{\Delta(T)^2}{3}\right\rfloor$.
\end{abstract} 
\subjclass[2010]{05C05, 05C30, 05C76}
\keywords{Mutual-visibility set, $c_Q$-visible set, Absolute–clear graph, Tree, Line graph}
\maketitle
\section{Introduction}
The concept of mutual visibility in graphs originates from the idea of restricting shortest paths by excluding internal vertices from a specified set. Mutual-visibility sets, originally introduced as a tool for studying information flow and structural restrictions in complex networks, have since gained increasing attention due to their theoretical significance and diverse applications. For a simple graph $G(V,E)$ and a subset $X \subseteq V$, two vertices $u$ and $v$ are called $X$-visible \cite{sandi} if there exists a shortest path between them whose internal vertices lie outside $X$. When every pair of vertices in $X$ satisfies this condition, the set $X$ is referred to as a mutual-visibility set \cite{Stefano}. The maximum size of such a set is the mutual-visibility number $\mu(G)$, and the number of maximum mutual-visibility sets is denoted by $r_{\mu}(G)$. These definitions provide a unified framework for studying how shortest-path constraints shape the structure of subsets within a graph.

Interest in mutual visibility has increased notably in recent years, owing to its relevance in both theoretical studies and real-world applications. The earliest investigations can be traced back to the work of Wu and Rosenfeld on visibility issues in pebble graphs \cite{Geo_convex_1}, and a formal graph-theoretic treatment was later provided by Di Stefano \cite{Stefano}. Since then, mutual visibility has been used as a structural tool for understanding how information, influence, and coordination propagate in networks where interactions are limited by the underlying topology~\cite{MV_2, MV_5, MV_9}. Several variations and generalizations of the concept have subsequently emerged, broadening its scope and applicability~\cite{MV_10}. 

In this paper, we study mutual-visibility in trees. We show that a subset $S$ of vertices of a tree $T$ is a mutual-visibility set if and only if it coincides with the set of leaves of the Steiner subtree $T\langle S\rangle$. In~\cite{VP_2}, the notion of absolute-clear graphs was introduced. In this paper, it is shown that every tree is absolute-clear. In Section~\ref{P5.sec4}, it is shown that the mutual-visibility number of the line graph of a tree $T$ equals that of $T$. We also establish a sharp lower bound for the mutual-visibility number of the iterated line graph of a tree. .
\section{Notations and preliminaries}

Let $G = (V, E)$ represent a simple, undirected graph, where $V(G)$ denotes the set of vertices and $E(G)$ the set of edges. Throughout this work, we assume graphs are connected unless stated otherwise, meaning that there exists a path between every pair of vertices. The maximum degree of $G$ is denoted by $\Delta(G)$, or simply by $\Delta$ when no ambiguity arises. The \emph{line graph} $L(G)$ is the graph with vertex set $E$  and two vertices of $L(G)$ are adjacent if and only if the corresponding edges of $G$ are incident to a common vertex in $G$.

A sequence $(u_0,u_1,\ldots,u_n)$ is called a $(u_0,u_n)$\emph{-path} if $u_i u_{i+1}\in E(G)$ for all $0\le i<n$. A \emph{cycle} in $G$ is a path together with the edge $u_0u_n$. A connected graph with no cycles is called a \emph{tree}. In a tree $T$, a vertex of degree one is called a \emph{leaf}, and a vertex of degree at least three is called a \emph{branch vertex}. Let $\mathrm{Br}(T)=\{v\in V(T):\deg_T(v)\ge 3\}$ denote the set of branch vertices of $T$, and let $\mathcal{L}(T)$ denote the set of all leaves of $T$. For $u,v\in V(T)$, the unique $(u, v)$-path in $T$ is denoted by $P_T(u,v)$. A \emph{leg} of $T$ is the unique path joining a leaf vertex $u$ to its nearest branch vertex $b(u)$ in $T$, that is, the path $P_T(u,b(u))$.

Let $T$ be a tree and $S$ is non-empty subset of $V(T)$. The \emph{Steiner subtree} spanned by $S$, denoted $T\langle S\rangle$, is the unique connected subgraph of $T$ that contains all vertices of $S$ and is minimal by inclusion, meaning that no proper subgraph of $T\langle S\rangle$ remains connected and still contains $S$.  Equivalently, $T\langle S\rangle$ is the unique subtree of $T$ that contains $S$ with minimum edges.

Let $Q \subseteq V(G)$. A subset $W \subseteq V(G)\setminus Q$ is called a $c_Q$\emph{-visible set}\cite{VP_2} if it is $Q$-visible and, in addition, every pair $\{u,w\}$ with $u \in Q$ and $w \in W$ is $Q$-visible. A $c_Q$-visible set $W$ is said to be maximal if it is not properly contained in any larger $c_Q$-visible set. Moreover, $W$ is called an \emph{absolute $c_Q$-visible set}\cite{VP_2} if the set $Q$ itself forms a mutual--visibility set of $G$; equivalently, this holds when $Q \cup W$ is $Q$-visible. A subset $Q \subseteq V(G)$ is called \emph{disjoint-visible}\cite{VP_2} if, whenever there exist multiple maximal absolute $c_Q$-visible sets, they are mutually disjoint. A graph $G$ is said to be \emph{absolute-clear}\cite{VP_2} when every non-empty subset of $V(G)$ is disjoint-visible. A maximal $2$-connected subgraph of a graph $G$ is called a \emph{block} of $G$. A connected graph $G$ is called a \emph{block graph} if every block of $G$ is a clique.

For an integer $n\ge 2$, the \emph{triangular graph} $\mathcal{T}(n)$ is the graph whose
vertex set is $V(T(n))=\bigl\{\{i,j\} : 1\le i<j\le n\bigr\}$ and in which two vertices are adjacent if and only if the corresponding
2-subsets have non-empty intersection. 
The \emph{Tur\'an graph} $T_{n,m}$ is the complete $m$-partite graph on $n$ vertices
whose partite sets have sizes differing by at most one; in particular, it is
unique up to isomorphism. If $n=qm+s$, where $0\le s<m$, then $T_{n,m}$ has $s$
partite sets of size $q+1$ and $m-s$ partite sets of size $q$, and
\[
|E(T_{n,m})|
=
\left(1-\frac{1}{m}\right)\frac{n^2}{2}
-\frac{s(m-s)}{2m}.
\]
\emph{Tur\'an's theorem} states that among all simple graphs on $n$ vertices containing
no $K_{m+1}$, the graph $T_{n,m}$ has the maximum number of edges
(see, e.g., \cite{West}).
\section{Mutual-Visibility Sets of Trees}
\begin{lemma}\label{P5.lem2}
Let $T$ be a tree and let $S\subseteq V(T)$. 
Then $S$ is a mutual–visibility set of $T$ if and only if $S$ is precisely the
leaf set of the Steiner subtree $T\langle S\rangle$.
\end{lemma}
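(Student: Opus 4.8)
The plan is to prove both implications directly, exploiting two elementary facts. First, in a tree the path $P_T(u,v)$ is the \emph{only} $(u,v)$-path and hence the only shortest one, so a pair $\{u,v\}$ is $S$-visible precisely when no internal vertex of $P_T(u,v)$ lies in $S$. Second, $T\langle S\rangle$ is itself a subtree of $T$, so for any $x,y\in V(T\langle S\rangle)$ we have $P_{T\langle S\rangle}(x,y)=P_T(x,y)$; in particular, if a vertex $z$ separates $x$ from $y$ inside $T\langle S\rangle$, then $z$ is an internal vertex of $P_T(x,y)$. A third ingredient, which I will call the \emph{pruning remark}, follows from minimality in the definition of the Steiner subtree: every leaf of $T\langle S\rangle$ must belong to $S$ (else deleting it yields a strictly smaller subtree still containing $S$), and more generally, for any $x\in V(T\langle S\rangle)$ every component of $T\langle S\rangle-x$ must contain a vertex of $S$ (else that whole component could be pruned). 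Note the inclusion $\mathcal{L}(T\langle S\rangle)\subseteq S$ thus holds without any visibility hypothesis.

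For the ``if'' direction, assume $S=\mathcal{L}(T\langle S\rangle)$ and take $u,v\in S$. The path $P_T(u,v)$ lies entirely inside $T\langle S\rangle$, and every internal vertex $w$ of it has $\deg_{T\langle S\rangle}(w)\ge 2$, so $w$ is not a leaf of $T\langle S\rangle$ and therefore $w\notin S$. Hence $\{u,v\}$ is $S$-visible, and since $u,v$ were arbitrary, $S$ is a mutual-visibility set of $T$.

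For the ``only if'' direction, assume $S$ is a mutual-visibility set. By the pruning remark it remains only to show $S\subseteq\mathcal{L}(T\langle S\rangle)$. Suppose not, and pick $x\in S$ with $\deg_{T\langle S\rangle}(x)\ge 2$. Then $T\langle S\rangle-x$ has at least two components $C_1,C_2$, and by the pruning remark we may choose $s_i\in S\cap C_i$ for $i=1,2$; since $x\notin C_1\cup C_2$ we have $s_1\ne x$, $s_2\ne x$, and $s_1\ne s_2$. Because $x$ separates $C_1$ from $C_2$ in $T\langle S\rangle$, the second fact above shows $x$ is an internal vertex of $P_T(s_1,s_2)$, which is the unique shortest $(s_1,s_2)$-path. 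As $x\in S$, the pair $\{s_1,s_2\}$ is not $S$-visible, contradicting the hypothesis. Therefore $S=\mathcal{L}(T\langle S\rangle)$.

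The argument is essentially routine; the only place needing care is the pruning remark — extracting from the minimality of $T\langle S\rangle$ both that its leaves lie in $S$ and that no component of $T\langle S\rangle-x$ is free of $S$-vertices — together with the observation that shortest paths may be read off inside $T\langle S\rangle$ precisely because $T\langle S\rangle$ is a subtree of the tree $T$.
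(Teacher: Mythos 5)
Your proof is correct, and its overall skeleton matches the paper's: both directions are argued directly through degrees in $T\langle S\rangle$, with the backward implication being word-for-word the same idea (internal vertices of $P_T(a,b)$ have degree at least $2$ in $T\langle S\rangle$, hence are not leaves, hence not in $S$). The genuine difference lies in which auxiliary fact about the Steiner subtree carries the forward direction. The paper invokes the representation $T\langle S\rangle=\bigcup_{\{u,v\}\subseteq S}P_T(u,v)$ to get $\mathcal{L}(T\langle S\rangle)\subseteq S$, and simply asserts, for a non-leaf $x\in S$, that there exist $a,b\in S$ with $x$ internal on $P_T(a,b)$. You instead work directly from minimality by inclusion via your pruning remark: a leaf of $T\langle S\rangle$ outside $S$ could be deleted, and a component of $T\langle S\rangle-x$ free of $S$-vertices could be pruned, so every such component supplies an $S$-vertex, which yields the pair $s_1,s_2$ explicitly. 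This buys you two things: the inclusion $\mathcal{L}(T\langle S\rangle)\subseteq S$ without appealing to the union identity (which the paper states without proof), and a rigorous justification of precisely the step the paper leaves as an assertion. Both arguments share the same untreated degenerate case ($|S|\le 1$, where the single-vertex Steiner subtree has no leaves), so that is not a point against you relative to the paper.
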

\begin{proof}
($\Rightarrow$) Suppose that $S$ is a mutual–visibility set of $T$. By definition of Steiner subtree $S \subseteq T\langle S\rangle$.
If some vertex $x\in S$ were not a leaf of $T\langle S\rangle$, then 
$\deg_{T\langle S\rangle}(x)\ge 2$. 
Then there exist distinct vertices $a,b\in S$ such that $x$ lies on the unique 
$(a, b)$-path $P_T(a,b)$ in $T$, contradicting the mutual-visibility of $S$. 
Hence every vertex of $S$ is a leaf of $T\langle S\rangle$, and therefore
$S\subseteq \mathcal{L}(T\langle S\rangle)$.

To prove the reverse inclusion, let $\ell\in \mathcal{L}(T\langle S\rangle)$.  
Since 
\[
T\langle S\rangle=\bigcup_{\{u,v\}\subseteq S} P_T(u,v),
\]
the unique edge incident with $\ell$ lies on some path $P_T(u,v)$ with 
$u,v\in S$. 
On such a path, a vertex of degree~1 in $T\langle S\rangle$ can only occur as an 
endpoint; hence $\ell\in\{u,v\}\subseteq S$.  
Thus $\mathcal{L}(T\langle S\rangle)\subseteq S$. 
Combining the two inclusions yields $S=\mathcal{L}(T\langle S\rangle)$.

\smallskip
($\Leftarrow$) Conversely, assume that 
$S=\mathcal{L}(T\langle S\rangle)$. 
Let $a,b\in S$ with $a\neq b$. 
Since $T\langle S\rangle$ is the union of all $P_T(u,v)$ with $u,v\in S$, the 
path $P_T(a,b)$ is contained in $T\langle S\rangle$.  
Because all vertices of $S$ are leaves of $T\langle S\rangle$, none of the 
internal vertices of $P_T(a,b)$ belongs to $S$.  
Hence $\{a,b\}$ is $S$–visible, and therefore $S$ is a mutual–visibility set of 
$T$.
\end{proof}
\begin{lemma}\label{P5.lem3}
Let $T(V,E)$ be a tree. Then,
$$ |\mathcal{L}(T)| = 2 + \sum_{v \in \rm{Br}(T)} (\deg_T(v)-2) $$
\end{lemma}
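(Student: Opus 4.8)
The plan is to use the handshake lemma (sum of degrees equals twice the number of edges) together with the fact that a tree on $n$ vertices has exactly $n-1$ edges. First I would partition $V(T)$ according to degree: write $V(T) = \mathcal{L}(T) \cup \mathrm{Br}(T) \cup P$, where $P = \{v \in V(T) : \deg_T(v) = 2\}$ is the set of internal path vertices; these three sets are pairwise disjoint and exhaust $V(T)$ since every vertex has degree $1$, $2$, or $\ge 3$ in a tree (the degree-$0$ case is excluded because $T$ is connected with at least one edge — I should note the trivial edge case $|V(T)|=1$ separately or assume $|E(T)|\ge 1$).

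Next I would write down the two counting identities. Setting $n = |V(T)|$, $\ell = |\mathcal{L}(T)|$, $p = |P|$, and $b = |\mathrm{Br}(T)|$, we have $n = \ell + p + b$. The handshake lemma gives
\[
\sum_{v \in V(T)} \deg_T(v) = 2|E(T)| = 2(n-1),
\]
and splitting the left-hand sum over the three classes yields
\[
\sum_{v \in V(T)} \deg_T(v) = \ell \cdot 1 + p \cdot 2 + \sum_{v \in \mathrm{Br}(T)} \deg_T(v).
\]

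Then the proof is pure algebra: equate the two expressions for the degree sum, substitute $n = \ell + p + b$ on the right side of $2(n-1)$, and the $p$ terms cancel, leaving $\ell = 2 + \sum_{v \in \mathrm{Br}(T)}(\deg_T(v) - 2)$ after writing $\sum_{v \in \mathrm{Br}(T)} \deg_T(v) - 2b = \sum_{v \in \mathrm{Br}(T)}(\deg_T(v)-2)$. I do not anticipate a genuine obstacle here; the only subtlety is bookkeeping — making sure the degree-$2$ vertices are tracked and seen to cancel, and confirming the formula holds in the degenerate cases (a single edge, where $\mathrm{Br}(T) = \emptyset$ and $|\mathcal{L}(T)| = 2$, and a path, likewise). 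These edge cases are automatically covered by the computation since the branch-vertex sum is empty. The whole argument is three or four lines once the partition is set up.
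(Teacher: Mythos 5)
Your proposal is correct and follows essentially the same route as the paper: partition the vertices by degree ($1$, $2$, $\ge 3$), apply the handshake lemma with $|E(T)|=|V(T)|-1$, and cancel the degree-$2$ terms algebraically. Your explicit remark about excluding the single-vertex tree (where the formula fails since the lone vertex has degree $0$) is a small point the paper's proof leaves implicit, but otherwise the two arguments are identical.
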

\begin{proof}
Let $V_1$ and  $V_2$ denote the sets of vertices of degree $1$ and  $2$ respectively.  
Write $n_1=|V_1|$, $n_2=|V_2|$, and $n_{\ge3}=|\rm{Br}(T)|$.  
By the Handshaking Lemma,
\[
\sum_{v\in V(T)} \deg_T(v) = 2|E(T)| = 2(|V(T)|-1).
\]
It follows that,
\begin{align*}
n_1 + 2n_2 + \sum_{v\in \mathrm{Br}(T)} \deg_T(v)
   &= 2(n_1 + n_2 + n_{\ge 3} - 1) \\
n_1 + \sum_{v\in \mathrm{Br}(T)} \deg_T(v)
   &= 2n_1 + 2n_{\ge 3} - 2 \\
n_1
   &= 2 + \sum_{v\in \mathrm{Br}(T)} \bigl(\deg_T(v) - 2\bigr).
\end{align*}

Since $n_1 = |\mathcal{L}(T)|$, the claim follows.
\end{proof}
\begin{obse}\label{P5.obs1}
Let $T$ be a tree and let $T'$ be a subtree of $T$. If $S\subseteq V(T)$ is a mutual-visibility set of $T$, then $S\cap V(T')$ is a mutual-visibility set of $T'$, since in a tree the unique path between any two vertices of $V(T')$ is the same in both $T$ and $T'$.
\end{obse}
\begin{lemma}\label{P5.lem6}
Let $T$ be a tree. If a mutual–visibility set $S\subseteq V(T)$ contains a vertex of the Steiner subtree spanned by $ \mathrm{Br}(T)$, then $|S| < |\mathcal{L}(T)|$.
\end{lemma}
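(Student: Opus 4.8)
The plan is to prove the contrapositive by means of the leaf‑counting identity. Assume $\mathrm{Br}(T)\neq\varnothing$ (otherwise the hypothesis is vacuous), write $T'=T\langle \mathrm{Br}(T)\rangle$, and let $S$ be a mutual–visibility set with $S\cap V(T')\neq\varnothing$; put $H=T\langle S\rangle$. One may assume $|S|\ge 2$, since otherwise $|S|=1<3\le|\mathcal{L}(T)|$ by Lemma~\ref{P5.lem3}. By Lemma~\ref{P5.lem2} we have $S=\mathcal{L}(H)$, and since $H$ is a subtree of $T$ it follows that $\mathrm{Br}(H)\subseteq \mathrm{Br}(T)$ and $\deg_H(v)\le\deg_T(v)$ for every vertex $v$. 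Applying Lemma~\ref{P5.lem3} to $H$ and to $T$ and subtracting yields
\[
|\mathcal{L}(T)|-|S|=\sum_{v\in \mathrm{Br}(H)}\bigl(\deg_T(v)-\deg_H(v)\bigr)+\sum_{v\in \mathrm{Br}(T)\setminus \mathrm{Br}(H)}\bigl(\deg_T(v)-2\bigr),
\]
a sum of nonnegative terms in which every term of the second kind is at least $1$. Hence the whole argument reduces to exhibiting a single branch vertex of $T$ that is not a branch vertex of $H$.

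To produce such a vertex, fix $w\in S\cap V(T')$. By the union description of Steiner subtrees used in Lemma~\ref{P5.lem2}, $w$ lies on some path $P_T(b,b')$ joining branch vertices $b,b'\in \mathrm{Br}(T)$, while $w\in S=\mathcal{L}(H)$ forces $\deg_H(w)=1$. If $w$ is itself a branch vertex of $T$, then $\deg_T(w)\ge 3>1=\deg_H(w)$, so $w\notin \mathrm{Br}(H)$ and we are done. Otherwise $w$ is neither a branch vertex nor a leaf of $T$ — a leaf cannot be an interior vertex of a branch‑to‑branch path — so $b\neq b'$ and $w$ is an interior vertex of $P_T(b,b')$ with two distinct neighbours $w^{-},w^{+}$ on that path. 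Since $\deg_H(w)=1$, at least one of the edges $ww^{-},ww^{+}$ is absent from $H$; relabelling if necessary, say $ww^{+}\notin E(H)$, where $w^{+}$ lies on the $b'$‑side of $w$.

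The step I expect to be the main obstacle is showing that this single missing edge severs the entire branch $b'$ from $H$. Here one uses that $H$, being a connected subgraph of the tree $T$, contains the unique $T$‑path between any two of its vertices; since the $T$‑path from $w$ to $b'$ runs through $ww^{+}\notin E(H)$, we conclude $b'\notin V(H)$, and in particular $b'\notin \mathrm{Br}(H)$. In either case we have located a vertex $v_0\in \mathrm{Br}(T)\setminus \mathrm{Br}(H)$, so the displayed identity gives $|\mathcal{L}(T)|-|S|\ge \deg_T(v_0)-2\ge 1$, that is, $|S|<|\mathcal{L}(T)|$, as required. The remaining points are routine and can be disposed of quickly: the monotonicities $\mathrm{Br}(H)\subseteq \mathrm{Br}(T)$ and $\deg_H\le\deg_T$, the expansion of $V(T')$ as a union of branch‑to‑branch paths, and the fact that no leaf of $T$ lies on $T'$.
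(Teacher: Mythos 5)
Your proof is correct, and it takes a genuinely different route from the paper's. The paper argues structurally: it splits into the cases $|\mathrm{Br}(T)|=1$ and $|\mathrm{Br}(T)|\ge 2$, shows that the vertex $b\in S\cap V(T\langle \mathrm{Br}(T)\rangle)$ confines $S$ to a single component of $T-b$ together with $b$, and then compares the leaves of the resulting subtree $U$ with those of $T$ (via Lemma~\ref{P5.lem7} and Theorem~\ref{P5.th1}, plus the observation that at least two leaves of $T$ survive outside $U$). You instead sharpen the counting argument of Lemma~\ref{P5.lem4}: subtracting the identity of Lemma~\ref{P5.lem3} for $H=T\langle S\rangle$ and for $T$ writes $|\mathcal{L}(T)|-|S|$ as a sum of nonnegative terms in which each vertex of $\mathrm{Br}(T)\setminus\mathrm{Br}(H)$ contributes at least $1$, so the whole lemma reduces to exhibiting one branch vertex of $T$ that is not a branch vertex of $H$. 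Your local analysis at $w\in S\cap V(T\langle \mathrm{Br}(T)\rangle)$ supplies it: either $w\in\mathrm{Br}(T)$ and, being a leaf of $H$, lies outside $\mathrm{Br}(H)$; or $w$ is interior to a branch-to-branch path and, since $\deg_H(w)=1$, one of the two path edges at $w$ is missing from $H$, which (because $H$ is a subtree of $T$ and hence closed under taking unique $T$-paths between its vertices) expels the corresponding endpoint $b'$ from $V(H)$ altogether. The edge cases are handled properly ($|S|=1$ via $|\mathcal{L}(T)|\ge 3$; for $|S|\ge 2$ the tree $H$ has at least two vertices, so Lemma~\ref{P5.lem3} applies to it). Your route avoids the case distinction on components and the appeal to Theorem~\ref{P5.th1} and Lemma~\ref{P5.lem7}, using only Lemmas~\ref{P5.lem2} and~\ref{P5.lem3}, and it yields a quantitative bound $|\mathcal{L}(T)|-|S|\ge \deg_T(v_0)-2$ for the exhibited vertex $v_0$; the paper's argument, in exchange, gives a more explicit structural picture of how such a set $S$ must sit inside a single branch at $b$.
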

\begin{proof}
Let $B= \mathrm{Br}(T)$ and let $S$ be a mutual–visibility set containing a vertex 
$b\in V(T\langle B \rangle)$.

\smallskip
\noindent\textbf{Case 1: $\boldsymbol{|B|=1}$.}
Let $B=\{c\}$ so that $c$ is the unique branch vertex of $T$. 
Then $T\langle B\rangle$ is a graph with only one vertex $c$ and hence $b=c$.  
Since $d=\deg_T(c)\ge3$, removing $c$ disconnects $T$ into $d$ components,
each of which is a path (a leg attached at $c$).  Denote these legs by
$L_1,\dots,L_d$.

Let $S$ be a mutual–visibility set with $c\in S$.  
If $x\in S\cap V(L_i)$ and $y\in S\cap V(L_j)$ for some $i\ne j$, then the
unique path $P_T(x,y)$ has $c$ as an internal vertex, contradicting mutual
visibility.  Thus all vertices of $S\setminus\{c\}$ lie in a single leg, say
$L_1$.

The induced subgraph on $V(L_1)\cup\{c\}$ is a path, and by
Observation~\ref{P5.obs1} the set $S\cap (V(L_1)\cup\{c\})=S$ is a mutual–visibility set in this path.
Since a path has exactly two leaves, any
mutual–visibility set on a path has size at most~$2$, that is, $|S|\leq 2$. On the other hand, all leaves of $T$ occur at the ends of the $d$ legs, so
$|\mathcal{L}(T)|=d\ge3$.  Hence $|S|<|\mathcal{L}(T)|$ in this case.

\smallskip
\noindent\textbf{Case 2: $\boldsymbol{|B|\ge2}$.}
We begin by proving that, after deleting $b$, all vertices of $S$ are contained in a single component of $T - b$.

Removing $b$ disconnects $T$ into $\deg_T(b) \geq 2$ components. If $x$ and $y$ lie in different components of $T-b$, then the unique $(x, y)$-path in $T$ passes through $b$ as an internal vertex. Therefore, $x$ and $y$ cannot both belong to $S$. Hence there exists a unique component $T_k$ of $T-b$ such that $S \subseteq V(T_k)\cup\{b\}$.

Let $U$ be the subtree of $T$ induced by $V(T_k)\cup\{b\}$. Then
$S \subseteq V(U)$ and, by Observation~\ref{P5.obs1}, $S$ is a mutual-visibility
set in $U$. Since $U$ is a tree, it follows that
\begin{equation}\label{P5.eq4}
|S| \le \mu(U) = |\mathcal{L}(U)|.
\end{equation}

Note that any leaf of $U$ different from $b$ is also a leaf of $T$. Thus $\mathcal{L}(U)\setminus\{b\} \;\subseteq\; \mathcal{L}(T)$, and so
\begin{equation}\label{P5.eq2}
   |\mathcal{L}(U)| \;\le\; |\mathcal{L}(T)\cap V(U)| + 1.
\end{equation}

We next show that at least two leaves of $T$ lie outside $U$. Since $|B|\ge2$ and $b\in V(T\langle B \rangle)$, the vertex $b$ lies on the path joining at least two branch vertices. 

If $b$ is an internal vertex of $T\langle B \rangle$, then there exists a component $T_j\ne T_k$ of $T-b$ that contains a branch 
vertex of $T$. Consider the component $T_j$ as a tree in its own right.  Since it has a branch vertex, Lemma~\ref{P5.lem3} implies that $T_j$ has at 
least three leaves. When we reattach $b$ to $T_j$ to obtain $T$, at most one of these leaves 
(namely, the neighbor of $b$ in $T_j$) can lose its leaf status; hence at least 
two leaves of $T_j$ remain leaves of $T$. 
All such leaves lie in $T_j$, which is disjoint from $U$, so $|\mathcal{L}(T)\setminus V(U)|  \geq 2$.

If $b$ is a branch vertex of $T$, then $d=\deg_T(b)\ge3$, and so $T-b$ has at least three components, say $T_1,\dots,T_d$, one of which is $T_k$.
For each $j\ne k$, view $T_j$ as a tree in its own right. Since every tree has at least one leaf, each $T_j$ contains a leaf. Let $x_j$ be the neighbour of $b$ in $T_j$. When we reattach $b$ to $T_j$ to obtain $T$, the only vertex whose degree changes is $x_j$; hence at most one leaf of $T_j$ can lose its leaf status in
$T$. Therefore each component $T_j$ with $j\ne k$ contributes at least one leaf of $T$, and all such leaves lie outside $U$. Since there are at least two such components $T_j$ (because $d\ge3$ and we
exclude $T_k$), we again obtain $|\mathcal{L}(T)\setminus V(U)|  \geq 2$.

\smallskip
In either subcase we have $|\mathcal{L}(T)\setminus V(U)|\ge2$. Therefore
\[
   |\mathcal{L}(T)| 
   \;=\; |\mathcal{L}(T)\cap V(U)| + |\mathcal{L}(T)\setminus V(U)|
   \;\ge\; |\mathcal{L}(T)\cap V(U)| + 2,
\]
and hence $|\mathcal{L}(T)\cap V(U)| \;\le\; |\mathcal{L}(T)| - 2$.
Combining this with \eqref{P5.eq2} gives
\[
   |\mathcal{L}(U)| 
   \;\le\; |\mathcal{L}(T)\cap V(U)| + 1
   \;\le\; (|\mathcal{L}(T)| - 2) + 1
   \;=\; |\mathcal{L}(T)| - 1.
\]
Finally, combining this with \eqref{P5.eq4}, we obtain $|S| \;\le\; |\mathcal{L}(U)| \;\le\; |\mathcal{L}(T)| - 1
   \;<\; |\mathcal{L}(T)|$, as required.

\smallskip
In both cases, whenever $S$ contains a vertex $b\in V(T\langle B\rangle)$, we
have $|S|<|\mathcal{L}(T)|$. 
This completes the proof.
\end{proof}
If $T$ is a path, then Bujtás et al.~\cite{sandi} showed that $r_\mu(T)=\binom{|V(T)|}{2}$. We next generalize this result to all trees.
\begin{lemma}[{\cite[Coro 4.3]{Stefano}}]\label{P5.th1}
Let $T = (V, E )$ be a tree and $L$ the set of its leaves. Then $\mathcal{L}$ is a mutual-visibility set and $\mu(T ) = |\mathcal{L}|$.     
 \end{lemma}
\begin{theorem}\label{P5.th3}
Let $T$ be a tree containing at least one branch vertex. Let $\ell(u)$ denote the length of the leg
of $T$ determined by the leaf vertex $u$ of $T$. 
Then 
\[
   r_{\mu}(T) \;=\; \prod_{u\in \mathcal{L}(T)} \ell(u).
\]
\end{theorem}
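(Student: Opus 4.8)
The plan is to identify the maximum mutual–visibility sets of $T$ with the transversals of a natural family of vertex sets coming from the legs, after which the count $\prod_{u}\ell(u)$ drops out. Writing $B=\mathrm{Br}(T)$, the structural fact I would establish first is that the vertices of $T$ outside the Steiner subtree $T\langle B\rangle$ split, according to which leg they lie on, into the sets $A_u:=V(P_T(u,b(u)))\setminus\{b(u)\}$ for $u\in\mathcal{L}(T)$, that these are pairwise disjoint, and that $|A_u|=\ell(u)$. The inclusion $A_u\subseteq V(T)\setminus V(T\langle B\rangle)$ and the size count are immediate from the definition of a leg. For the reverse inclusion, given $v\notin V(T\langle B\rangle)$ one notes $\deg_T(v)\le 2$ and walks away from $v$ along each of its (at most two) directions until a leaf or a branch vertex is met: one direction must end at a leaf $u$, and since every traversed vertex has degree $2$, the other direction reaches the nearest branch vertex $b(u)$, so $v\in A_u$. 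Disjointness holds because two distinct legs can meet only at a common branch endpoint, which has been removed. This handles $|B|=1$ uniformly: there $T\langle B\rangle$ is the single branch vertex $c$, and each $A_u$ is the leg at $c$ with $c$ deleted.

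For the inequality $r_\mu(T)\le\prod_u\ell(u)$, let $S$ be a maximum mutual–visibility set, so $|S|=\mu(T)=|\mathcal{L}(T)|$ by Theorem~\ref{P5.th1}, and $|\mathcal{L}(T)|\ge 3$ by Lemma~\ref{P5.lem3}. Lemma~\ref{P5.lem6} forces $S\cap V(T\langle B\rangle)=\varnothing$, so $S\subseteq\bigcup_{u}A_u$. I claim $|S\cap A_u|\le 1$ for each leaf $u$. Indeed, if $x,y\in S\cap A_u$ are distinct with $y$ strictly closer to $b(u)$ along the leg, then $S$ cannot be contained in $A_u$ (otherwise, by Lemma~\ref{P5.lem7} and Theorem~\ref{P5.th1} applied to the path $A_u\cup\{b(u)\}$, we would get $|S|\le 2$), so there is some $w\in S$ outside $A_u$; the unique path $P_T(x,w)$ leaves the leg through $b(u)$, hence has $y$ as an internal vertex — contradicting mutual visibility since $x,y,w\in S$. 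Since the $A_u$ are disjoint and number $|\mathcal{L}(T)|$, the equality $\sum_u|S\cap A_u|=|S|=|\mathcal{L}(T)|$ forces $|S\cap A_u|=1$ for every $u$; thus $S$ is a transversal, choosing exactly one vertex from each $A_u$.

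For the reverse inequality, and to complete the bijection, I would verify that every transversal $S=\{s_u : u\in\mathcal{L}(T)\}$ with $s_u\in A_u$ is in fact a maximum mutual–visibility set. Given $u\ne u'$, the unique path $P_T(s_u,s_{u'})$ runs from $s_u$ toward $b(u)$, through $T\langle B\rangle$ to $b(u')$, and then out to $s_{u'}$, so all its internal vertices lie in $(A_u\setminus\{s_u\})\cup V(T\langle B\rangle)\cup(A_{u'}\setminus\{s_{u'}\})$, a set disjoint from $S$; hence $\{s_u,s_{u'}\}$ is $S$-visible and $S$ is a mutual–visibility set. (Alternatively, one checks $\mathcal{L}(T\langle S\rangle)=S$ and applies Lemma~\ref{P5.lem2}.) Since $|S|=|\mathcal{L}(T)|=\mu(T)$, it is maximum. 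Combining the two directions, the maximum mutual–visibility sets of $T$ are precisely the transversals of $(A_u)_{u\in\mathcal{L}(T)}$, and there are exactly $\prod_{u\in\mathcal{L}(T)}|A_u|=\prod_{u\in\mathcal{L}(T)}\ell(u)$ of them, which is the claimed value of $r_\mu(T)$.

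The step I expect to demand the most care is the structural decomposition $V(T)\setminus V(T\langle B\rangle)=\bigcup_u A_u$ together with its disjointness and the uniform treatment of $|B|=1$; once this is in place, Lemma~\ref{P5.lem2}, Lemma~\ref{P5.lem6}, Lemma~\ref{P5.lem7} and Theorem~\ref{P5.th1} reduce the remainder to routine bookkeeping.
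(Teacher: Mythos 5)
Your proposal is correct and follows essentially the same route as the paper: use Lemma~\ref{P5.lem6} to exclude $T\langle B\rangle$ from any maximum set, partition $V(T)\setminus V(T\langle B\rangle)$ into the leg sets $W(u)=A_u$, show via Lemma~\ref{P5.lem7} and Theorem~\ref{P5.th1} that each leg contributes exactly one vertex, verify that every transversal is a mutual--visibility set by the three-segment path decomposition, and multiply the $\ell(u)$ choices. The only differences are cosmetic: you prove the leg decomposition of $V(T)\setminus V(T\langle B\rangle)$ more explicitly and fold the paper's two-step bound $|S_u|\le 2$, then $|S_u|\le 1$ into a single argument.
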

\begin{proof}
Let $T$ be a tree with at least one branch vertex, and let
$\mathcal{L}(T)$ denote its leaf set.  
By Lemma~\ref{P5.th1}, we have $
   \mu(T) = |\mathcal{L}(T)|$.

Let $B$ be the set of branch vertices of $T$, and let $H=T\langle B\rangle$
be the Steiner subtree spanned by $B$.  
By Lemma~\ref{P5.lem6}, no mutual–visibility set of cardinality
$\mu(T)$ can contain a vertex of $H$.  
Thus for every $S$ with $|S|=\mu(T)$ we have $S \subseteq V(T)\setminus V(H)$.

For each leaf $u\in\mathcal{L}(T)$, let $b(u)$ be its nearest branch vertex
and let $L(u)$ be the leg of $T$ determined by $u$, that is, the path
$P_T(u,b(u))$.  
By the definition of $b(u)$, every internal vertex of $L(u)$ has degree~$2$ in
$T$, and $L(u)$ meets $H$ only in the vertex $b(u)$.  
It follows that the sets $W(u) = V(L(u))\setminus\{b(u)\}$ where $u\in\mathcal{L}(T)$,
are pairwise disjoint and
\[
   V(T)\setminus V(H) = \biguplus_{u\in\mathcal{L}(T)} W(u).
\]
In particular, any set $S$ of cardinality $\mu(T)$ satisfies
\[
   S = \biguplus_{u\in\mathcal{L}(T)} S_u,
   \qquad\text{where } S_u=S\cap W(u).
\]

Since $|S|=\mu(T)=|\mathcal{L}(T)|$, we have
\begin{equation}\label{P5.eq3}   \sum_{u\in\mathcal{L}(T)} |S_u| = |\mathcal{L}(T)|.
\end{equation}

\smallskip
We claim that each set $W(u)$ contributes exactly one vertex to $S$.  
For $u\in\mathcal{L}(T)$, we have
$S_u \;=\; S\cap W(u) \;=\; S\cap V(L(u))$. By Observation~\ref{P5.obs1}, the set $S_u$ is a mutual--visibility set of the path
$L(u)$.  
Since a path has exactly two leaves, Lemma~\ref{P5.th1} applied to $L(u)$
implies that $|S_u|\le 2$ for all $u\in\mathcal{L}(T)$.

We now show that in fact $|S_u|\le 1$ for every $u$.  
Suppose, to the contrary, that there exists $u_0\in\mathcal{L}(T)$ with
$|S_{u_0}|=2$.  
Choose distinct vertices $x_1,x_2\in S_{u_0}$, ordered so that $x_2$ is
closer to $b(u_0)$ than $x_1$ along the leg $L(u_0)$.  
Since $T$ has at least one branch vertex, it has at least three leaves, and
hence $|S|=\mu(T)=|\mathcal{L}(T)|\ge 3$.  
Thus there exists a leaf $v\ne u_0$ with $S_v\ne\emptyset$, and we may take
$y\in S_v$.  
The unique path $P_T(x_1,y)$ then contains $x_2$ as an internal vertex,
contradicting the $S$--visibility of $x_1$ and $y$.  
Therefore $|S_u|\le 1$ for every $u\in\mathcal{L}(T)$.

Combining this with \eqref{P5.eq3} yields $   |S_u| = 1$ for all $u\in\mathcal{L}(T)$. Thus every maximum mutual--visibility set $S$ contains exactly one vertex from each $W(u)$.

Conversely, we claim that any choice of one vertex from each set $W(u)$, $u\in\mathcal{L}(T)$, yields a mutual–visibility set $S$ of cardinality $\mu(T)$.  Let $x,y\in S$ be distinct, and suppose $x\in W(u)$ and $y\in
W(v)$.  The unique path $P_T(x,y)$ decomposes into three segments: the path from $x$ to $b(u)$, the subpath in $H$ from $b(u)$ to $b(v)$, and the path from $b(v)$ to $y$.  All internal vertices of the first and last segments lie in $W(u)\setminus\{x\}$ and $W(v)\setminus\{y\}$, respectively, and these sets
are disjoint from $S$, since $S$ contains exactly one vertex from each leg. The middle segment lies entirely in $H$, which contains no vertex of $S$. Therefore no internal vertex of $P_T(x,y)$ belongs to $S$, and hence
$\{x,y\}$ is $S$–visible.  As $x$ and $y$ were arbitrary, it follows that $S$ is a mutual–visibility set.

For each leaf $u\in\mathcal{L}(T)$ there are exactly $\ell(u)$ admissible choices for the unique vertex selected from $W(u)$, and the choices for
distinct leaves are independent in forming a mutual–visibility set of cardinality $\mu(T)$.  Consequently, the total number of such sets is
\[
   r_{\mu}(T) = \prod_{u\in\mathcal{L}(T)} \ell(u).
\]

\end{proof}

\begin{defn}
Let $T$ be a tree, let $S\subseteq V(T)$, and let $x\in V(T)$. The attachment point of $x$ with respect to $S$ in $T$ is defined as the vertex of the Steiner subtree $T\langle S\rangle$ that is closest to $x$, and is denoted by $a_S(x)$. Note that $a_S(x)$ is unique.
\end{defn}
\begin{lemma}\label{P5.lem5}
Let $T$ be a tree, let $Q\subseteq V(T)$ be a mutual–visibility set of $T$, and let $u\in Q$ and $w\in V(T)\setminus Q$. 
Then $\{u,w\}$ is $Q$–visible if and only if $a_Q(w)\notin Q\setminus\{u\}$, 
where $a_Q(w)$ denotes the attachment point of $w$ with respect to $Q$ in $T$.
\end{lemma}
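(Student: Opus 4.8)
The plan is to work entirely with the unique $(u,w)$-path of the tree. Write $H=T\langle Q\rangle$ for the Steiner subtree spanned by $Q$; since $Q$ is a mutual--visibility set, Lemma~\ref{P5.lem2} gives $Q=\mathcal{L}(H)$, so a vertex of $H$ lies in $Q$ exactly when it is a leaf of $H$. Because $T$ is a tree, the path $P_T(u,w)$ is the unique shortest $(u,w)$-path, hence $\{u,w\}$ is $Q$--visible if and only if no internal vertex of $P_T(u,w)$ belongs to $Q$. The whole argument then reduces to identifying which internal vertices of $P_T(u,w)$ can lie in $Q$.

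First I would record the path decomposition. Since $a_Q(w)$ is the vertex of $H$ nearest $w$, no vertex of $P_T(w,a_Q(w))$ other than $a_Q(w)$ can lie in $V(H)$ (such a vertex would be strictly nearer to $w$), so $P_T(w,a_Q(w))$ meets $H$ only at $a_Q(w)$; concatenating it with $P_H(a_Q(w),u)\subseteq H$ (recall $u\in Q\subseteq V(H)$) yields a $(w,u)$-path, which by uniqueness is $P_T(u,w)$. Thus $a_Q(w)$ lies on $P_T(u,w)$, and $P_T(u,w)$ splits as the segment inside $H$ from $u$ to $a_Q(w)$, followed by the segment from $a_Q(w)$ to $w$ that touches $H$ only at $a_Q(w)$.

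Next I would eliminate $Q$--vertices on each segment apart from $a_Q(w)$ itself. On the $a_Q(w)$--to--$w$ segment every vertex except $a_Q(w)$ lies outside $V(H)\supseteq Q$, hence outside $Q$. On the $u$--to--$a_Q(w)$ segment, any vertex strictly between the two endpoints has both its neighbours on that path, and that path lies in $H$, so the vertex has degree at least $2$ in $H$; it is therefore not a leaf of $H$, hence not in $Q$ by Lemma~\ref{P5.lem2}. Consequently the only internal vertex of $P_T(u,w)$ that can possibly belong to $Q$ is $a_Q(w)$, and $a_Q(w)$ is an internal vertex of $P_T(u,w)$ precisely when $a_Q(w)\notin\{u,w\}$. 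Since $w\notin Q$, membership $a_Q(w)\in Q$ forces $a_Q(w)\neq w$; hence $\{u,w\}$ is $Q$--visible if and only if $a_Q(w)=u$ or $a_Q(w)\notin Q$, which is exactly the asserted condition $a_Q(w)\notin Q\setminus\{u\}$ (equivalently, $a_Q(w)=u$ or $a_Q(w)\in V(H)\setminus Q$).

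I do not expect a serious obstacle here; the only care needed is in handling the degenerate configurations so the case analysis reads cleanly: $a_Q(w)=u$ (the $H$--segment is trivial and $P_T(u,w)$ is just the $w$--side segment), $a_Q(w)=w$ (this happens exactly when $w$ is a non-leaf vertex of $H$, so the $w$--side segment is trivial and $P_T(u,w)$ lies entirely in $H$), and the small case $|Q|=1$ (where $H$ is the single vertex $u=a_Q(w)$). In each of these the decomposition and the two ``no $Q$--vertex'' observations above still apply verbatim and give the same equivalence, so the statement follows uniformly.
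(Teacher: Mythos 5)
Your proposal is correct and follows essentially the same route as the paper: decompose $P_T(u,w)$ at $a_Q(w)$ into a segment inside $H=T\langle Q\rangle$ and a segment meeting $H$ only at $a_Q(w)$, use $Q=\mathcal{L}(H)$ to rule out $Q$-vertices among the internal vertices of either segment, and conclude that $a_Q(w)$ is the only possible internal vertex in $Q$. Your justification of the decomposition and the degenerate cases ($a_Q(w)\in\{u,w\}$, $|Q|=1$) is in fact slightly more explicit than the paper's, which simply asserts the intersection identity \eqref{P5.eq1}.
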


\begin{proof}
Let $H=T\langle Q\rangle$ be the Steiner subtree spanned by $Q$. 
Since $Q$ is a mutual–visibility set of $T$, we have $Q=\mathcal{L}(H)$ by Lemma~\ref{P5.lem2}.

\smallskip
Consider vertices $u\in Q$ and $w\in V(T)\setminus Q$. 
Because $T$ is a tree, there exists a unique path $P_T(u,w)$ between them. 
By the definition of the attachment point $a_Q(w)$, the intersection of $P_T(u,w)$ with $H$ is precisely the path in $H$ joining $u$ and $a_Q(w)$; that is,
\begin{equation}\label{P5.eq1}
V(P_T(u,w))\cap V(H) = V\bigl(P_H(u,a_Q(w))\bigr).
\end{equation}
The internal vertices of $P_H(u,a_Q(w))$ are not leaves of $H$, and hence not elements of $Q=\mathcal{L}(H)$. Thus, if a vertex of $Q$ appears as an internal vertex of $P_T(u,w)$, then $a_Q(w)\in Q$. That is, if $a_Q(w) \notin Q$ ($a_Q(w) \in V(H)\setminus Q$, since $a_Q(w) \in V(H)$), then $u$ and $w$ are $Q$-visible. If $a_Q(w)\in Q\setminus\{u\}$, then $a_Q(w)$ is an internal vertex of $P_T(u,w)$, and hence $\{u,w\}$ is not $Q$–visible.  If $a_Q(w)=u$, then by~\eqref{P5.eq1} we have $V(P_T(u,w))\cap V(H)=V\bigl(P_H(u,u)\bigr)=\{u\}$, so no internal vertex of the path lies in $Q$, and $\{u,w\}$ is $Q$–visible. Therefore, $\{u,w\}$ is $Q$–visible if and only if either $a_Q(w)=u$ or $a_Q(w)\in V(H)\setminus Q$.
\end{proof}

\begin{prop}\label{P5.prop1}
Let $T$ be a tree, let $Q\subseteq V(T)$ be a nonempty subset, and let $H=T\langle Q\rangle$ be the Steiner subtree spanned by $Q$. 
For each vertex $x\in V(H)$, define
\[
B(x) = \{ w\in V(T)\setminus Q:\ a_Q(w) = x \},
\]
where $a_Q(w)$ denotes the attachment point of $w$ with respect to $Q$. 
Then the family $\{B(x) \neq \emptyset:x\in V(H)\}$ forms a partition of $V(T)\setminus Q$.
\end{prop}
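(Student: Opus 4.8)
The plan is to show that the sets $B(x)$, as $x$ ranges over $V(H)$, are pairwise disjoint and cover $V(T)\setminus Q$; discarding the empty ones then yields a genuine partition. The only non-trivial ingredient is that the attachment point $a_Q(w)$ is well-defined and unique for every $w\in V(T)\setminus Q$, which is exactly the content of the definition preceding the proposition (and is itself a standard fact about trees: in a tree, for a fixed subtree $H$ and a vertex $w$, the vertex of $H$ closest to $w$ is unique, since if two distinct vertices $y,y'\in V(H)$ both realized the minimum distance, the $(y,y')$-path would lie in $H$ and would give a strictly shorter route from $w$ to one of its interior vertices).

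First I would observe that, by definition, each $w\in V(T)\setminus Q$ satisfies $a_Q(w)=x$ for exactly one $x\in V(H)$. This immediately gives two things. For disjointness: if $w\in B(x)\cap B(x')$ then $x=a_Q(w)=x'$, so distinct indices give disjoint sets. For covering: every $w\in V(T)\setminus Q$ lies in $B(a_Q(w))$, so $\bigcup_{x\in V(H)}B(x)=V(T)\setminus Q$. One should also note that each $B(x)$ is by construction a subset of $V(T)\setminus Q$, so no element of $Q$ is ever included.

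Next I would pass from the family $\{B(x):x\in V(H)\}$ to the subfamily of nonempty members. Removing empty sets changes neither the union nor the pairwise-disjointness property, so $\{B(x)\neq\emptyset:x\in V(H)\}$ is a collection of nonempty, pairwise disjoint subsets of $V(T)\setminus Q$ whose union is all of $V(T)\setminus Q$; that is precisely a partition. I would phrase the final step as: since every $w\in V(T)\setminus Q$ belongs to $B(a_Q(w))$ and that block is nonempty, the nonempty blocks already cover $V(T)\setminus Q$, completing the argument.

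I do not anticipate a serious obstacle here; the statement is essentially a repackaging of the uniqueness of the attachment point. The one point deserving a sentence of care is the well-definedness of $a_Q(w)$ itself — that the closest vertex of $H$ to $w$ exists and is unique — but this has already been asserted in the definition, so in the proof I would simply invoke it. If a self-contained argument is wanted, one notes that $V(H)$ is finite and nonempty (as $Q$ is nonempty), so a closest vertex exists, and uniqueness follows from the tree structure as indicated above.
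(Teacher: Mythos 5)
Your proof is correct and rests on the same single fact as the paper's: that $a_Q(w)$ is a well-defined, unique vertex of $H$ for each $w\in V(T)\setminus Q$, so the nonempty sets $B(x)$ are exactly the fibers of the map $w\mapsto a_Q(w)$ and hence pairwise disjoint and covering. The paper packages this as an equivalence relation ($w_1\sim w_2$ iff $a_Q(w_1)=a_Q(w_2)$) and takes equivalence classes, which is the same argument in slightly different clothing, so no substantive difference.
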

\begin{proof}
Define a relation $\sim$ on $V(T)\setminus Q$ by
\[
w_1\sim w_2 \quad \text{if and only if} \quad a_Q(w_1)=a_Q(w_2).
\]

\smallskip
\noindent \emph{Reflexivity}: For any $w\in V(T)\setminus Q$, we have $a_Q(w)=a_Q(w)$, hence $w\sim w$.

\smallskip
\noindent \emph{Symmetry}: If $w_1\sim w_2$, then $a_Q(w_1)=a_Q(w_2)$, and consequently $a_Q(w_2)=a_Q(w_1)$; thus $w_2\sim w_1$.

\smallskip
\noindent\emph{Transitivity}: If $w_1\sim w_2$ and $w_2\sim w_3$, then $a_Q(w_1)=a_Q(w_2)=a_Q(w_3)$, which implies $w_1\sim w_3$.

\smallskip
Hence $\sim$ is an equivalence relation on $V(T)\setminus Q$. 
By definition of $\sim$, the equivalence class of any vertex $w\in V(T)\setminus Q$ is
\[
[w] \;=\; \{\,z\in V(T)\setminus Q:\ a_Q(z)=a_Q(w)\,\}
     \;=\; B\bigl(a_Q(w)\bigr).
\]
Since $a_Q(w)\in V(H)$, each equivalence class $[w]$ coincides with $B(x)$ for some $x\in V(H)$. 
Conversely, for any nonempty $B(x)$ with $x\in V(H)$,
\[
B(x)=  \{ w\in V(T)\setminus Q:\ a_Q(w)=x\}=    \{w\in V(T)\setminus Q:\ a_Q(w)=a_Q(x)\}
= [x].
\]

\smallskip
Therefore, the family $\{B(x)\ne\emptyset : x\in V(H)\}$ consists of the distinct equivalence classes of $\sim$ and thus forms a partition of $V(T)\setminus Q$.
\end{proof}
\begin{theorem}\label{P5.th2}
Every tree is absolute–clear.
\end{theorem}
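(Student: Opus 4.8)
The plan is to verify directly that every non-empty $Q\subseteq V(T)$ is disjoint-visible. First I would dispose of the degenerate situation: if $Q$ is not a mutual-visibility set of $T$, then there are no absolute $c_Q$-visible sets at all, and the disjoint-visibility requirement holds vacuously. Hence I may assume $Q$ is a mutual-visibility set; then ``absolute $c_Q$-visible'' is the same as ``$c_Q$-visible'', and by Lemma~\ref{P5.lem2} the Steiner subtree $H=T\langle Q\rangle$ satisfies $Q=\mathcal{L}(H)$. I would then split into the cases $|Q|\ge 2$ and $|Q|=1$, the point being that two or more maximal sets can occur only in the latter.

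For $|Q|\ge 2$ I would show there is a \emph{unique} maximal $c_Q$-visible set, which makes disjoint-visibility vacuous there. The candidate is $W^{*}=\{\,w\in V(T)\setminus Q : a_Q(w)\in V(H)\setminus Q\,\}$. Every $c_Q$-visible set lies in $W^{*}$: if $w\notin W^{*}$ then $a_Q(w)\in Q$ (as $a_Q(w)\in V(H)=Q\cup(V(H)\setminus Q)$), and choosing any $u\in Q\setminus\{a_Q(w)\}$, which is possible since $|Q|\ge 2$, Lemma~\ref{P5.lem5} shows $\{u,w\}$ is not $Q$-visible, so $w$ belongs to no $c_Q$-visible set. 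And $W^{*}$ is itself $c_Q$-visible: for $u\in Q$ and $w\in W^{*}$, Lemma~\ref{P5.lem5} gives $Q$-visibility of $\{u,w\}$ because $a_Q(w)\in V(H)\setminus Q$; and for $w_1,w_2\in W^{*}$ I would write $P_T(w_1,w_2)$ as $P_T(w_1,a_Q(w_1))$ followed by the $H$-path $P_H(a_Q(w_1),a_Q(w_2))$ followed by $P_T(a_Q(w_2),w_2)$, observing that each tail meets $H$ only at its attachment point, so its interior avoids $Q$, while the middle path joins two \emph{internal} vertices of $H$ and therefore never passes through a leaf of $H$, i.e.\ never through a vertex of $Q=\mathcal{L}(H)$. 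Hence $W^{*}$ is the unique maximal $c_Q$-visible set.

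For $|Q|=1$, say $Q=\{q\}$, every pair $\{q,w\}$ with $w\notin Q$ is automatically $Q$-visible because $q$ is an endpoint of $P_T(q,w)$; so a set $W\subseteq V(T)\setminus\{q\}$ is $c_Q$-visible exactly when it is $\{q\}$-visible, i.e.\ when $q$ is not an internal vertex of $P_T(w_1,w_2)$ for any $w_1,w_2\in W$, which in a tree means that all of $W$ lies in a single component of $T-q$. Consequently the maximal $c_Q$-visible sets are precisely the vertex sets $V(C)$ of the components $C$ of $T-q$, and distinct components are vertex-disjoint; thus $Q$ is disjoint-visible (when $q$ is a leaf there is at most one component, hence a unique maximal set, again fine). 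Combining the two cases, every non-empty subset of $V(T)$ is disjoint-visible, so $T$ is absolute-clear.

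I expect the only genuine work to be the second half of the $|Q|\ge 2$ case — checking that $W^{*}$ really is $c_Q$-visible, and in particular that a pair lying inside $W^{*}$ stays $Q$-visible. Everything else is either bookkeeping or a direct invocation of Lemma~\ref{P5.lem5}; it is the attachment-point decomposition of $P_T(w_1,w_2)$, combined with the identity $Q=\mathcal{L}(H)$, that makes that step work.
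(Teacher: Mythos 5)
Your proof is correct and takes essentially the same route as the paper: the same candidate $W^{*}=\{\,w\in V(T)\setminus Q : a_Q(w)\in V(H)\setminus Q\,\}$ shown to be the unique maximal absolute $c_Q$-visible set when $|Q|\ge 2$ via Lemma~\ref{P5.lem5} and $Q=\mathcal{L}(H)$, and the same description of the maximal sets as the components of $T-q$ when $|Q|=1$. The only nuance is that when $a_Q(w_1)=a_Q(w_2)$ and $w_1,w_2$ lie in the same component of $T-V(H)$, your three-part decomposition is a walk rather than the path $P_T(w_1,w_2)$ (the paper isolates this as a separate subcase), but the conclusion is immediate there since $P_T(w_1,w_2)$ then avoids $V(H)\supseteq Q$ altogether.
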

\begin{proof}
Let $T$ be a tree and fix a nonempty subset $Q\subseteq V(T)$. 
If $Q$ is not a mutual–visibility set of $T$, then by definition, no absolute $c_Q$–visible set exists, and the disjoint–visible condition is vacuously satisfied. 
Hence we may assume throughout that $Q$ is mutual–visible.

\smallskip
\noindent\textbf{Case 1: $\boldsymbol{|Q|=1}$.}
Let $Q=\{u\}$. Then $Q$ is trivially a mutual–visibility set. 
Since the Steiner subtree $T\langle Q\rangle$ consists of the single vertex $u$, we have $a_Q(w)=u$ for every $w\in V(T)\setminus Q$. 
By Lemma~\ref{P5.lem5}, each pair $\{u,w\}$ is therefore $Q$–visible.

For $w_1,w_2\in V(T)\setminus Q$, the pair $\{w_1,w_2\}$ is $Q$–visible if and only if $u$ is not an internal vertex of the unique path $P_T(w_1,w_2)$; equivalently, $w_1$ and $w_2$ lie in the same component of $T-u$. 
Thus a subset $W\subseteq V(T)\setminus Q$ is an absolute $c_Q$–visible set if and only if $W$ is contained in a vertex set of single component of $T-u$, and the maximal such sets are exactly the vertex sets of the components of $T-u$. 
These maximal sets are clearly pairwise disjoint.

\smallskip
\noindent\textbf{Case 2: $\boldsymbol{|Q|\ge2}$.}
Let $H=T\langle Q\rangle$ be the Steiner subtree spanned by $Q$.  
Since $Q$ is mutual–visible, Lemma~\ref{P5.lem2} implies that 
\[
Q = \mathcal{L}(H),
\]
the set of leaves of $H$.  
For each $x\in V(H)$ define
\[
B(x) = \{\, w\in V(T)\setminus Q : a_Q(w)=x \,\}.
\]
By Proposition~\ref{P5.prop1}, the nonempty sets $B(x)$, $x\in V(H)$, form a partition of $V(T)\setminus Q$.

If $w\in B(x)$ for some $x\in Q$ and $u\in Q\setminus\{x\}$, then $a_Q(w)=x\ne u$, and Lemma~\ref{P5.lem5} shows that $\{u,w\}$ is not $Q$–visible.  
Consequently, no absolute $c_Q$–visible set may contain any vertex from a branch $B(x)$ with $x\in Q$.  
Hence every absolute $c_Q$–visible set $W$ satisfies
\[
W \subseteq \bigcup_{x\in V(H)\setminus Q} B(x).
\]
Therefore, if $\bigcup_{x\in V(H)\setminus Q} B(x)$ itself is a $c_Q$–visible set, then it is the unique maximal absolute $c_Q$--visible set . We claim that $W=\bigcup_{x\in V(H)\setminus Q}$ is $c_Q$–visible.

\smallskip
We first verify that $\{q,w\}$  is $Q$-visible where $q\in Q$ and $w\in W$. Note that $a_Q(w)\in V(H)\setminus Q$ is an internal vertex of $H$. The path $P_T(q,w)$ decomposes into a subpath from $q$ to $a_Q(w)$ within $H$, followed by a subpath from $a_Q(w)$ to $w$ outside $H$. Since $q\in Q=\mathcal{L}(H)$ is a leaf of $H$, the unique subpath from $q$ to $a_Q(w)$ has $q$ as its only vertex in $Q$, and all of its internal vertices lie in $V(H)\setminus Q$.  
The remaining subpath from $a_Q(w)$ to $w$ lies entirely in $V(T)\setminus V(H)$ and therefore contains no vertex of $Q$. Consequently, no internal vertex of $P_T(q,w)$ belongs to $Q$, and hence
$\{q,w\}$ is $Q$--visible.

\smallskip
We now verify that any two vertices of $W$ are also $Q$--visible. Let $w_1,w_2\in W$, and put $x_i=a_Q(w_i)\in V(H)\setminus Q$ for $i=1,2$.

\smallskip
\textbf{Subcase 2.1: $\boldsymbol{x_1 = x_2}$.}
Write $x_1=x_2=x$.  
Then $w_1,w_2\in B(x)$.  
Let $T_x$ denote the subtree of $T$ induced by $B(x)$.  
By the definition of the attachment point, $T_x$ meets $H$ only in the vertex $x$.  
Since $Q\subseteq V(H)$ and $x\notin Q$, we have $T_x\cap Q=\emptyset$.  
Because $T_x$ is a tree, the entire path $P_T(w_1,w_2)$ lies in $T_x$, and hence contains no internal vertex of $Q$.  
Thus $\{w_1,w_2\}$ is $Q$–visible.

\smallskip
\textbf{Subcase 2.2: $\boldsymbol{x_1 \neq x_2}$.}
The path $P_T(w_1,w_2)$ is the concatenation of the paths $P_T(w_1,x_1)$, $P_T(x_1,x_2)$ inside $H$, and $P_T(x_2,w_2)$.  
All internal vertices of the paths $P_T(w_i,x_i)$ lie in $V(T)\setminus V(H)$ and thus avoid $Q$.  
The middle subpath $P_T(x_1,x_2)\subseteq H$ has endpoints $x_1,x_2$, which are internal vertices of $H$.  
A leaf of $H$ (i.e., a vertex of $Q$) cannot occur as an internal vertex of this path.  
Hence no internal vertex of $P_T(w_1,w_2)$ lies in $Q$, and $\{w_1,w_2\}$ is $Q$–visible.

\smallskip
We have shown that $Q\cup W$ is $Q$--visible, and therefore $W$ is the unique maximal absolute $c_Q$–visible set when $|Q|\geq 2$.
 
\smallskip
Combining the two cases, we conclude that for every nonempty $Q\subseteq V(T)$, the maximal absolute $c_Q$–visible sets are pairwise disjoint.  
Hence every tree $T$ is absolute–clear.
\end{proof}
In \cite{VP_1}, an exponential-time algorithm was given for computing the visibility polynomial of a graph $G$. In \cite{VP_2}, it was shown that the visibility polynomial of the corona product $G \odot H$ can be computed efficiently whenever $G$ is absolute-clear. Since every tree is absolute-clear, it follows that the visibility polynomial of the corona product of a tree with an arbitrary graph can be computed efficiently.

\section{Mutual-Visibility of Line Graphs and Iterated Line Graphs of Trees}\label{P5.sec4}
\begin{lemma}[{\cite[Th.~4.2]{Stefano}}]\label{P5.lem8}
Let $G=(V,E)$ be a connected block graph, and let $X$ be the set of all
cut--vertices of $G$.  
Then $V\setminus X$ is a mutual--visibility set of $G$, and $  \mu(G)=|V\setminus X|$.
\end{lemma}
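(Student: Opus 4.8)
The plan is to prove the two assertions of the lemma in turn: that $V\setminus X$ is a mutual-visibility set (this is the only place the hypothesis that blocks are cliques is needed), and that every mutual-visibility set of $G$ has at most $|V\setminus X|$ vertices. For the first I would use the remark that in a block graph the internal vertices of a shortest path are always cut-vertices. Indeed, if $w$ is a non-cut-vertex then any two of its neighbours are adjacent: a path joining them inside the connected graph $G-w$, together with the two edges at $w$, forms a cycle, which lies in a single block and hence in a clique; so if $w$ occurred as an internal vertex of a path, the two neighbours of $w$ along it would be adjacent and would shortcut the path, contradicting minimality. Consequently, for $u,w\in V\setminus X$ every shortest $(u,w)$-path has all of its internal vertices in $X$, so $\{u,w\}$ is $(V\setminus X)$-visible; as $u,w$ were arbitrary, $V\setminus X$ is a mutual-visibility set and $\mu(G)\ge|V\setminus X|$.

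For the reverse inequality I would take an arbitrary mutual-visibility set $S$ and build an injection $f\colon S\to V\setminus X$. On $S\setminus X$ let $f$ be the identity. For a cut-vertex $c\in S\cap X$ note first that $S\setminus\{c\}$ must lie inside a single component $D_c$ of $G-c$: if two of its vertices lay in different components, then $c$ would be an internal vertex of every shortest path between them, contradicting $S$-visibility (here $c\in S$). Let $\mathrm{bad}(c)$ be the union of the remaining components of $G-c$; it is nonempty because $c$ is a cut-vertex, and it is disjoint from $S$ by construction. Pick any component $D$ with $V(D)\subseteq\mathrm{bad}(c)$. The graph $G[V(D)\cup\{c\}]$ is connected on at least two vertices, hence has at least two non-cut-vertices; at least one of them, say $w_c$, differs from $c$, so $w_c\in\mathrm{bad}(c)$, and $w_c$ is moreover a non-cut-vertex of $G$, since deleting it leaves $c$ joined to $G[V(D)\cup\{c\}]-w_c$ and to all the other components of $G-c$. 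Set $f(c)=w_c$.

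It remains to verify injectivity, where two small points must be checked. Each $w_c$ lies outside $S$, so the images of $S\cap X$ are disjoint from $f(S\setminus X)=S\setminus X$; and for distinct $c_1,c_2\in S\cap X$ the sets $\mathrm{bad}(c_1)$ and $\mathrm{bad}(c_2)$ are disjoint, for a common vertex $z$ would be separated from $c_2$ by $c_1$ and from $c_1$ by $c_2$ (using $c_2\in S\setminus\{c_1\}\subseteq V(D_{c_1})$ and, symmetrically, $c_1\in V(D_{c_2})$), so that $d(z,c_2)\ge d(z,c_1)+1$ and $d(z,c_1)\ge d(z,c_2)+1$ would hold simultaneously, which is impossible. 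Hence $f$ is injective, $|S|\le|V\setminus X|$, and combining with the first part gives $\mu(G)=|V\setminus X|$, with $V\setminus X$ a maximum mutual-visibility set.

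The crux of the argument — and its only genuinely block-graph-specific ingredient — is the shortcut remark that shortest paths avoid non-cut-vertices; everything after that works in any connected graph and is a matter of bookkeeping, the two slightly delicate points being the pairwise disjointness of the sets $\mathrm{bad}(c)$ and the fact that each of them really contains a vertex that is a non-cut-vertex of all of $G$, not merely of the pendant piece hanging at $c$.
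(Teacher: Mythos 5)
Your proposal is correct, but there is nothing in the paper to compare it against: Lemma~\ref{P5.lem8} is imported verbatim from Di Stefano (cited as Th.~4.2 of \cite{Stefano}) and the paper supplies no proof of it. Your argument is a sound, self-contained substitute. The lower bound is handled by the shortcut observation: if $w$ is not a cut-vertex, a path in $G-w$ between two neighbours of $w$ closes a cycle through $w$, the cycle lies in a single block, and in a block graph that block is a clique, so the two neighbours are adjacent; hence no non-cut-vertex can be internal on a shortest path, and $V\setminus X$ is a mutual-visibility set. The upper bound is an injection $f\colon S\to V\setminus X$ for an arbitrary mutual-visibility set $S$: the identity off $X$, while each $c\in S\cap X$ is sent to a non-cut-vertex $w_c$ of $G$ chosen in a component of $G-c$ not containing $S\setminus\{c\}$. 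The delicate points you flag are indeed the ones that need checking, and your treatment of both is right: $w_c$ is a non-cut-vertex of all of $G$ (deleting it leaves $c$ connected to its pendant piece and to all other components of $G-c$), and the sets $\mathrm{bad}(c_1),\mathrm{bad}(c_2)$ are disjoint for distinct $c_1,c_2\in S\cap X$ (your two distance inequalities are incompatible). As you note, the upper-bound half uses nothing about blocks being cliques, so it in fact yields $\mu(G)\le |V\setminus X|$ for every connected graph; in particular, specialised to trees it gives an alternative route to Lemma~\ref{P5.lem4} of the paper, where non-cut-vertices are exactly the leaves. The only cosmetic slip is the phrase ``hence $\mu(G)\ge|V\setminus X|$, \dots with $V\setminus X$ a maximum mutual-visibility set''--- fine as stated, but worth noting that the lemma's first assertion is exactly this lower-bound half, so the two halves together give the stated equality.
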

\begin{theorem}\label{P5.th4}
Let $T$ be a tree with at least two edges.  
Then $ \mu\bigl(L(T)\bigr) \;=\; \mu(T)$.
\end{theorem}

\begin{proof}
By Lemma~\ref{P5.th1}, the mutual-visibility number of a tree is the number of its leaves. Let $G=L(T)$ be the line graph of $T$.  
In \cite{Harary} it is shown that $G$ is a connected block graph in which every cut--vertex lies in exactly two blocks. Let $X$ denote the set of cut--vertices of $G$.

Since $G$ is a connected block graph, Lemma~\ref{P5.lem8}, applies and yields $ \mu(G)=|V(G)\setminus X|$. Thus it remains to identify the set $V(G)\setminus X$. Let the vertex $v_e$ of $G$ corresponds to an edge $e=uv$ of $T$. Such a vertex is a cut--vertex of $G$ precisely when both $u$ and $v$ are non--leaf vertices of $T$. Indeed, if $e=uv$ with both $u$ and $v$ non-leaves, then in $T-e$ the vertices $u$ and $v$ lie in distinct components, and every edge incident with $u$ lies in one block of $L(T)$ while every edge incident with $v$ lies in another; hence the vertex $v_e$ of $L(T)$ is a cut–vertex. Conversely, if $e$ is incident with a leaf of $T$, then $v_e$ lies in exactly 
one clique of $L(T)$ and cannot be a cut–vertex.  
Equivalently, $v_e\notin X$ if and only if $e$ is incident with a leaf of $T$. Hence $V(G)\setminus X
   =
   \{ v_e \in V(G) : e\in E(T) \text{ is incident with a leaf of }T \,\}$.

Since $T$ has at least two edges, each leaf of $T$ is incident with a unique edge, and distinct leaves correspond to distinct such edges. Therefore $
   |V(G)\setminus X| = |\mathcal{L}(T)|$. It follows that, $
   \mu\bigl(L(T)\bigr)
   = |V(G)\setminus X|
   = |\mathcal{L}(T)|
   = \mu(T)$,
which completes the proof.
\end{proof}

The equality $\mu(L(T))=\mu(T)$ holds for trees, and it also holds for cycles,
since $L(C_n) = C_n$. However, it does not extend to all unicyclic graphs.
\begin{prop}\label{P5.prop3}
There exists a unicyclic graph $G$ such that $\mu(L(G))\neq \mu(G)$.
\end{prop}

\begin{proof}
Let $G$ be obtained from $K_3$ with vertices $a,b,c$ by attaching a leaf to each of $a,b,c$, denoted by $a',b',c'$, respectively. Then $G$ is unicyclic. The set $\{a',b',c'\}$ is a mutual-visibility set, so $\mu(G)\geq 3$. Suppose $X$ is a mutual-visibility set with $|X|=4$. Since $G$ has only three non-leaf vertices, $X$ contains at least one leaf. If $X$ contains exactly one leaf, say $a'$, then $X=\{a',a,b,c\}$, but the unique shortest $(a',b)$-path $(a',a,b)$ has internal vertex $a\in X$, a contradiction. If $X$ contains exactly two leaves, say $a',b'$, then $X$ contains two vertices from $\{a,b,c\}$, and hence at least one of $a$ or $b$ lies in $X$; however, the unique shortest $(a',b')$-path $(a',a,b,b')$ has an internal vertex in $X$, a contradiction. If $X$ contains all three leaves, then $X$ contains one of $a,b,c$, say $a$, but the unique shortest $(a',b')$-path $(a',a,b,b')$ again has an internal vertex in $X$, a contradiction. Thus $\mu(G)=3$.

Viewing the edges of $G$ as vertices of $L(G)$, the set $\{aa',bb',ca,bc\}$ is a mutual-visibility set of $L(G)$. Hence $\mu(L(G))\ge 4>3=\mu(G)$, and therefore $\mu(L(G))\neq \mu(G)$.
\end{proof}
Thus, trees and cycles preserve the mutual-visibility number under the line graph operator, while Proposition~\ref{P5.prop3} shows that this property fails even within the class of unicyclic graphs. Moreover, for block graphs this preservation need not hold; for example, $\mu(L(K_n))=\left\lfloor \frac{n^2}{3}\right\rfloor > \mu(K_n)=n$ (see Proposition~\ref{P5.prop2}). This motivates the following problems.

\smallskip
\noindent
\textbf{Problem 1.} Characterize the connected unicyclic graphs $G$ for which $\mu(L(G))=\mu(G)$.

\smallskip
\noindent
\textbf{Problem 2.} Characterize the connected graphs $G$ for which $\mu(L(G))=\mu(G)$.

\smallskip
Theorem~\ref{P5.th4} further raises the question whether the identity $\mu(L(T))=\mu(T)$ extends to the iterated line graph $L(L(T))$ for every tree $T$. We show that this does not hold in general.
\begin{prop}\label{P5.prop2}
For every integer $n\ge 2$,
\[
\mu\bigl(L(L(K_{1,n}))\bigr)
=\mu(\mathcal{T}(n))=
\left\lfloor \frac{n^2}{3}\right\rfloor .
\]
\end{prop}
\begin{proof}
Since the line graph of a star is complete, we have $L(K_{1,n}) \cong K_n$.
Therefore $L(L(K_{1,n})) \cong L(K_n)$, and the latter is the triangular graph $\mathcal{T}(n)$.

Recall that the vertices of $\mathcal{T}(n)$ are the $2$-subsets of 
$S=\{1,2,\dots,n\}$, where two vertices are adjacent if and only if the
corresponding $2$-subsets intersect. Let $X \subseteq V(\mathcal{T}(n))$. Identifying each vertex of $\mathcal{T}(n)$ with the corresponding edge of $K_n$, define a graph $H_X$ with vertex set $S$ and 
edge set $E(H_X)=X$.

We claim that $X$ is a mutual-visibility set in $\mathcal{T}(n)$ if and only if $H_X$ is $K_4$-free.

Assume that $X$ is a mutual-visibility set in $\mathcal{T}(n)$. Suppose, for a contradiction,
that $H_X$ contains a $K_4$ on vertices $a,b,c,d \in S$. Then the vertices of $\mathcal{T}(n)$ corresponding to the edges $ab, ac, ad, bc, bd, cd$ all belong to $X$. In $\mathcal{T}(n)$, the vertices $ab$ and $cd$
are non-adjacent, and their common neighbours are precisely $ac, ad, bc,$ and $bd$.
Thus every shortest $(ab,cd)$-path has its internal vertex in $X$, contradicting
the assumption that $X$ is a mutual-visibility set. Hence $H_X$ is $K_4$-free.

Conversely, assume that $H_X$ is $K_4$-free, and let $e,f \in X$ be distinct.
If the corresponding edges in $K_n$ intersect, then $e$ and $f$ are adjacent in
$\mathcal{T}(n)$, and hence are trivially mutually visible. Now suppose that $e$ and $f$ correspond to disjoint edges $ab$ and $cd$, where
$a,b,c,d$ are distinct. Since $H_X$ is $K_4$-free, not all of the vertices $ac, ad, bc, bd$ belong to $X$. Choose one such vertex, say $ac \notin X$. Then $ac$ is adjacent to both $e$ and $f$, so $(e,ac,f)$ is a shortest path between $e$ and $f$ whose internal
vertex does not belong to $X$. Therefore $e$ and $f$ are mutually visible.

It follows that $\mu(\mathcal{T}(n)) = \max\{\, |E(H)| : H \text{ is a $K_4$-free graph on } S \,\}$.
By Tur\'an's theorem, the maximum number of edges in a $K_4$-free graph on $n$
vertices is attained by the Tur\'an graph $T_{n,3}$. Hence
\[
\mu(\mathcal{T}(n)) = |E(T_{n,3})| =  \frac{n^2}{3} - \frac{3s - s^2}{6}.
\]
where $n = 3q + s$ with $s \in \{0,1,2\}$. Since $0 \le \frac{3s - s^2}{6} < 1$ for $s \in \{0,1,2\}$, we obtain $\mu(\mathcal{T}(n)) = \left\lfloor \frac{n^2}{3} \right\rfloor$.
\end{proof}
\begin{coro}
For every $n \ge 4$, $\mu\bigl(L(L(K_{1,n}))\bigr) > \mu(K_{1,n})$, since 
$\mu(K_{1,n}) = n$ (see \cite{Stefano}). In particular, the equality 
$\mu(L(T)) = \mu(T)$ does not extend to $L(L(T))$ for all trees.
\end{coro}

Next, we establish a sharp lower bound for $\mu\bigl(L(L(T))\bigr)$.
\begin{lemma}[{\cite[Lem 2.2]{Stefano}}]\label{P5.lem9}
Let $H$ be a convex subgraph of a graph $G$. Then $\mu(H) \leq \mu(G)$.
\end{lemma}
\begin{theorem}\label{P5.th5}
Let $T$ be a tree with at least two edges. Then
\[
\mu\bigl(L(L(T))\bigr)\ge \left\lfloor \frac{\Delta(T)^2}{3}\right\rfloor .
\]
Moreover, this bound is sharp.
\end{theorem}

\begin{proof}
Let $\Delta = \Delta(T)$, and let $x \in V(T)$ be a vertex of degree $\Delta$.
Let $E_x=\{e_1,e_2,\dots,e_{\Delta}\}$ be the set of edges of $T$ incident with $x$.

The vertices of $L(L(T))$ correspond to pairs of adjacent edges of $T$, that is,
to length-$2$ paths in $T$. Consider the induced subgraph $H$ of $L(L(T))$ whose
vertices correspond to all length-$2$ paths in $T$ with middle vertex $x$. Thus $V(H)=\{\,v_{ij} : 1 \le i < j \le \Delta\,\}$, where $v_{ij}$ denotes the vertex of $L(L(T))$ corresponding to the path formed
by the adjacent edges $e_i$ and $e_j$.

We show that $H\cong \mathcal{T}(\Delta)$, the triangular graph. Recall that the vertices of $\mathcal{T}(\Delta)$ are the
$2$-subsets of $\{1,2,\dots,\Delta\}$, and two vertices are adjacent if and only
if the corresponding $2$-subsets intersect. Define
\[
\varphi:V(H)\longrightarrow V(\mathcal{T}(\Delta)),\qquad
\varphi(v_{ij})=\{i,j\}.
\]
Clearly, $\varphi$ is a bijection. Now $v_{ij}$ and $v_{k\ell}$ are adjacent in
$H$ if and only if the corresponding length-$2$ paths in $T$ share an edge,
which happens if and only if $ \{i,j\}\cap\{k,\ell\}\neq \emptyset$. By the definition of $\mathcal{T}(\Delta)$, this is equivalent to saying that
$\varphi(v_{ij})$ and $\varphi(v_{k\ell})$ are adjacent in $\mathcal{T}(\Delta)$. Hence
$\varphi$ is an isomorphism, and therefore $
H\cong \mathcal{T}(\Delta)$.

Next we show that $H$ is convex in $L(L(T))$. Let $v_{ij}, v_{k\ell} \in V(H)$.
If $v_{ij}$ and $v_{k\ell}$ are adjacent in $L(L(T))$, then every shortest
$(v_{ij}, v_{k\ell})$-path lies entirely in $H$. Suppose now that $v_{ij}$ and $v_{k\ell}$ are not adjacent in $L(L(T))$. Then
$\{i,j\} \cap \{k,\ell\} = \emptyset$, so the indices $i,j,k,\ell$ are distinct.
Since all the edges $e_i, e_j, e_k, e_\ell$ are incident with $x$, the vertex
$v_{ik}$ belongs to $H$ and is adjacent to both $v_{ij}$ and $v_{k\ell}$.
Thus $d_{L(L(T))}(v_{ij}, v_{k\ell}) = 2$.

Now let $w$ be an internal vertex of a shortest $(v_{ij}, v_{k\ell})$-path in
$L(L(T))$. Then $w$ is adjacent to both $v_{ij}$ and $v_{k\ell}$, and hence the
length-$2$ path in $T$ corresponding to $w$ shares one edge with the path
corresponding to $v_{ij}$ and one edge with the path corresponding to $v_{k\ell}$.
Therefore, this path uses one edge from $\{e_i, e_j\}$ and one edge from
$\{e_k, e_\ell\}$. Since all these edges are incident with $x$, the two edges
corresponding to $w$ are also incident with $x$. Hence $w$ corresponds to a
length-$2$ path in $T$ with middle vertex $x$, and so $w \in V(H)$.

Thus every shortest path in $L(L(T))$ joining two vertices of $H$ lies entirely
in $H$, and therefore $H$ is convex in $L(L(T))$. By Lemma~\ref{P5.lem9}, it follows that $\mu(H)\le \mu\bigl(L(L(T))\bigr)$.
Using the isomorphism $H\cong \mathcal{T}(\Delta)$ and the fact that
\[
\mu(\mathcal{T}(\Delta))=\left\lfloor \frac{\Delta^2}{3}\right\rfloor,
\]
we obtain
\[
\mu\bigl(L(L(T))\bigr)\ge \mu(H)=\mu(\mathcal{T}(\Delta))
=\left\lfloor \frac{\Delta^2}{3}\right\rfloor.
\]

Finally, the bound is sharp. Indeed, if $T=K_{1,n}$, then $\Delta(T)=n$, and
Proposition~\ref{P5.prop2} gives
\[
\mu\bigl(L(L(T))\bigr)
=
\left\lfloor \frac{n^2}{3}\right\rfloor
=
\left\lfloor \frac{\Delta(T)^2}{3}\right\rfloor .
\]
This completes the proof.
\end{proof}
\begin{remark}
Theorem~\ref{P5.th5} shows that the behavior of $\mu(L(L(T)))$ is governed not only by the leaf structure of $T$, but also by its local branching. In particular, in contrast to the identity $\mu(L(T))=\mu(T)$, the parameter $\mu(L(L(T)))$ may grow quadratically in the maximum degree of $T$.
\end{remark}
For $n \ge 4$, we have $\mu\bigl(L(L(P_n))\bigr)=2$, since 
$L(L(P_n)) \cong L(P_{n-1}) \cong P_{n-2}$ is a path, and hence its
mutual-visibility number is $2$ (see \cite{Stefano}). Theorem~\ref{P5.th5} shows that the lower bound $
\mu\bigl(L(L(T))\bigr)\ge \left\lfloor \frac{\Delta(T)^2}{3}\right\rfloor$
is attained for stars. However, this bound is not attained for paths, since
$\left\lfloor \frac{\Delta(P_n)^2}{3}\right\rfloor = 1$. This naturally leads
to the following problem.

\smallskip
\noindent
\textbf{Problem 3.} Characterize all trees $T$ satisfying
\[
\mu\bigl(L(L(T))\bigr)=\left\lfloor \frac{\Delta(T)^2}{3}\right\rfloor.
\]
\section{Conclusion}
This paper provides a comprehensive study of mutual visibility in trees. A structural characterization of mutual-visibility sets is obtained by showing that a subset $S\subseteq V(T)$ is a mutual-visibility set if and only if it coincides with the set of leaves of the Steiner subtree $T\langle S\rangle$. For trees containing branch vertices, the notion of legs is introduced, and an explicit expression for the number of maximal mutual-visibility sets is derived in terms of the corresponding leg lengths. It is shown that the mutual-visibility number is preserved under the line graph operation for trees with at least two edges. Examples of unicyclic and block graphs for which this equality fails are also presented, and two open problems are posed. Finally, we established a sharp lower bound for the mutual-visibility number of the iterated line graph of a tree and proposed an open problem. The results presented here provide a complete characterization of mutual visibility sets in trees and form a foundation for further study in more general graph classes.
\bibliographystyle{plainurl}
\bibliography{cas-refs}

\begin{thebibliography}{10}

\bibitem{sandi}
Cs. Bujtás, S.~Klavžar, and J.~Tian.
\newblock Visibility polynomials, dual visibility spectrum, and
  characterization of total mutual-visibility sets.
\newblock {\em Aequationes mathematicae}, 99:1883--1901, 2025.
\newblock \href {https://doi.org/10.1007/s00010-025-01197-y}
  {\path{doi:10.1007/s00010-025-01197-y}}.

\bibitem{MV_10}
S.~Cicerone, G.~Di~Stefano, L.~Drožđek, J.~Hedžet, S.~Klavžar, and I.G.
  Yero.
\newblock Variety of mutual‑visibility problems in graphs.
\newblock {\em Theoretical Computer Science}, 974:114096, 2023.
\newblock \href {https://doi.org/10.1016/j.tcs.2023.114096}
  {\path{doi:10.1016/j.tcs.2023.114096}}.

\bibitem{MV_9}
S.~Cicerone, G.~Di~Stefano, and S.~Klavžar.
\newblock On the mutual-visibility in cartesian products and in triangle-free
  graphs.
\newblock {\em Applied Mathematics and Computation}, 438:127619, 2023.
\newblock \href {https://doi.org/10.1016/j.amc.2022.127619}
  {\path{doi:10.1016/j.amc.2022.127619}}.

\bibitem{MV_5}
S.~Cicerone, G.~Di~Stefano, S.~Klavžar, and I.G. Yero.
\newblock Mutual-visibility problems on graphs of diameter two.
\newblock {\em European Journal of Combinatorics}, 120:103995, 2024.
\newblock \href {https://doi.org/10.1016/j.ejc.2024.103995}
  {\path{doi:10.1016/j.ejc.2024.103995}}.

\bibitem{Stefano}
G.~Di~Stefano.
\newblock Mutual visibility in graphs.
\newblock {\em Applied Mathematics and Computation}, 419, 2022.
\newblock \href {https://doi.org/10.1016/j.amc.2021.126850}
  {\path{doi:10.1016/j.amc.2021.126850}}.

\bibitem{Harary}
F.~Harary.
\newblock {\em Graph Theory}.
\newblock Addison-Wesley Series in Mathematics. Addison-Wesley, Reading, MA,
  1969.

\bibitem{MV_2}
D.~Korže and A.~Vesel.
\newblock Variety of mutual-visibility problems in hypercubes.
\newblock {\em Applied Mathematics and Computation}, 491:129218, 2025.
\newblock \href {https://doi.org/10.1016/j.amc.2024.129218}
  {\path{doi:10.1016/j.amc.2024.129218}}.

\bibitem{Geo_convex_1}
A~Rosenfeld and A.~Y. Wu.
\newblock Geodesic convexity in discrete spaces.
\newblock {\em Journal of Information Sciences}, 80:127--132, 1994.

\bibitem{VP_2}
K.~B. Tonny and M.~Shikhi.
\newblock Visibility polynomial of corona of two graphs, 2025.
\newblock Accepted in Communications of the Korean Mathematical Society (2026).
\newblock \href {https://doi.org/10.48550/arXiv.2509.02509}
  {\path{doi:10.48550/arXiv.2509.02509}}.

\bibitem{VP_1}
K.~B. Tonny and M.~Shikhi.
\newblock On the visibility polynomial of graphs.
\newblock {\em IAENG International Journal of Applied Mathematics},
  56(4):1265--1274, 2026.
\newblock URL:
  \url{https://www.iaeng.org/IJAM/issues_v56/issue_4/IJAM_56_4_06.pdf}.

\bibitem{West}
Douglas~B. West.
\newblock {\em Introduction to Graph Theory}.
\newblock Prentice Hall, 2 edition, 2001.

\end{thebibliography}
\end{document}